\DeclareMathOperator{\aut}{Aut}
 \DeclareMathOperator{\frat}{Frat}
\DeclareMathOperator{\GL}{GL}
\DeclareMathOperator{\Ee}{E}
\DeclareMathOperator{\alt}{Alt}
\DeclareMathOperator{\End}{End} 
\DeclareMathOperator{\aff}{Aff}
\DeclareMathOperator{\fit}{Fit}
\newcommand{\FF}{\mathbb F}
\renewcommand{\emptyset}{\varnothing}
\newtheorem{thm}{Theorem}
\newtheorem{cor}[thm]{Corollary}
 \newtheorem{lemma}[thm]{Lemma}
\newtheorem{prop}[thm]{Proposition}
\numberwithin{equation}{section}
\renewcommand{\footnote}{\endnote}
\newcommand{\ignore}[1]{}\makeglossary
\begin{document}
\bibliographystyle{amsplain}
\subjclass{20P05}
\keywords{groups generation; waiting time; profinite groups}
\title[Strongly generating elements]{Strongly generating elements\\ in finite and profinite groups}
\author{Eloisa Detomi}
\address{Dipartimento di Ingegneria dell'Informazione - DEI, Universit\`a 
	di Padova, Via G. Gradenigo 6/B, 35121 Padova, Italy} 
\email{eloisa.detomi@unipd.it}
\author{Andrea Lucchini}
\address{Dipartimento di Matematica \lq\lq Tullio Levi Civita\rq\rq,  Universit\`a 
	di Padova, Via Trieste 63, 35121 Padova, Italy}\email{lucchini@math.unipd.it}

\begin{abstract}Given a finite group $G$ and an element $g\in G$, we may compare the expected number $e(G)$ of elements needed to generate $G$ and the expected number $e(G,g)$ of elements of $G$ needed to generate $G$ together with $g.$ We address the following question: how large can the difference $e(G)-e(G,g)$ be?
\end{abstract}
\maketitle
\section{introduction}
Let $G$ be a nontrivial finite group and let $x=(x_n)_{n\in\mathbb N}$ be a sequence of independent, uniformly distributed $G$-valued random variables.
Let $Y$ be a subset of $G.$
We may define a random variable $\tau_{G,Y}$ (a waiting time) by
$$\tau_{G,Y}=\min \{n \geq 1 \mid \langle Y, x_1,\dots,x_n \rangle = G\} \in [0,+\infty].$$
Since $\tau_{G,Y}>n$ if and only if $\langle Y, x_1,\dots,x_n \rangle \neq G$,  $P(\tau_{G,Y}>n)=1-P_{G,Y}(n),$ denoting by $$P_{G,Y}(n) =
\frac{|\{(g_1,\dots,g_n)\in G^n \mid \langle g_1,\dots,g_n, Y\rangle= G\}|}{|G|^n}$$ the probability that $n$ randomly chosen
elements of $G$ generate $G$ together with the elements of $Y.$
We denote by $e(G,Y)$ the expectation $\Ee(\tau_{G,Y})$ of this random variable.
In other word $e(G,Y)$ is the expected number of elements of $G$ which have to be drawn at random, with replacement,
before a set is found with the property that its union with $Y$ generates $G.$
When $Y=\emptyset,$ we just write $e(G)$ and $P_G(n)$ instead of $e(G,Y)$ and $P_{G,Y}(n).$

It follows from \cite[Theorem 11]{mon} that $e(G,Y)$ can be directly determined
using the M\"obius function defined on the
subgroup lattice  of $G$ by setting
$\mu_G(G)=1$ and $\mu _G(H)=-\sum _{H<K}\mu _G(K)$ for any $H<G$. Indeed the following formula holds:
$$	e(G,Y)=-\sum_{Y\leq H < G}\frac{\mu_G(H)|G|}{|G|-|H|}.
$$

Consider the particular case when $Y=\{g\}$ is a singleton. In this case we just write $e(G,g)$. Clearly $e(G,g)\leq e(G)$ (with equality if and only if $g$ belongs to the Frattini subgroup of $G$). The question that we want to address in this paper is whether there are elements $g$ with the property that $e(G,g)$ is \lq\lq much smaller\rq\rq \ of $e(G)$. We call these element \lq\lq strongly generating elements\rq\rq. 

To discuss this problem, it is useful to highlight the relation between $e(G,Y)$ and another invariant related with the enumeration of the maximal subgroups of $G.$
For $n\in \mathbb N,$ denote by $m_n(G,Y)$ the number of maximal subgroups of $G$ with index $n$ and containing $Y$. Let
$$\mathcal M(G,Y)=\sup_{n\geq 2}\frac{\log (m_n(G,Y))}{\log (n)}.$$
Actually  ${\mathcal M} (G)={\mathcal M} (G,\emptyset)$ is the \lq\lq polynomial degree\rq\rq \ of the rate of growth of $m_n(G,\emptyset)$. This rate has been studied for finite and profinite groups by
 several authors, including
 Mann, Shalev, Borovik, Jaikin-Zapirain and  Pyber 
(see \cite{bps}, \cite{jp}, \cite{m}, \cite{ms}).

In \cite[Theorem 1]{mosca}, improving an argument used by Lubotzky \cite{lub}, it is proved that $\lceil \mathcal M(G)\rceil - 4\leq e(G) \leq \lceil \mathcal M(G)\rceil+3$  for any finite group $G$. With precisely the same argument the following more general statement can be proved:

\begin{thm}\label{main}
Let $G$ be a finite group and $Y$ a subset of $G$. Then 
 $$\lceil \mathcal M(G,Y)\rceil - 4\leq e(G,Y) \leq \lceil \mathcal M(G,Y)\rceil+3.$$
\end{thm}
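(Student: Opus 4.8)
The plan is to rerun the proof of \cite[Theorem 1]{mosca} (itself a sharpening of the argument of Lubotzky \cite{lub}) with the set $Y$ carried along at every step. The only structural remark needed is this: a tuple $(x_1,\dots,x_n)$ fails to generate $G$ together with $Y$ exactly when $\langle x_1,\dots,x_n,Y\rangle$ is contained in some maximal subgroup $M$ of $G$, and any such $M$ automatically contains $Y$; so everywhere in the argument one may replace ``maximal subgroups of $G$'' by ``maximal subgroups of $G$ containing $Y$'', and $P_G(n)$, $m_k(G)$ by $P_{G,Y}(n)$, $m_k(G,Y)$. Throughout write $q(n)=1-P_{G,Y}(n)=P(\tau_{G,Y}>n)$, so that $e(G,Y)=\sum_{n\ge 0}q(n)$; note that $q$ is non-increasing and $0\le q(n)\le 1$. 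Put $d=\mathcal M(G,Y)$ and $t=\lceil d\rceil$, and assume $\langle Y\rangle\ne G$ (otherwise $e(G,Y)=0$ and there is nothing to prove).

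For the upper bound the key inequality is the union bound
$$q(n)\ \le\ \sum_{M}\left(\frac{|M|}{|G|}\right)^{n}\ =\ \sum_{k\ge 2}m_k(G,Y)\,k^{-n}\ \le\ \sum_{k\ge 2}k^{\,d-n},$$
the sums running over the maximal subgroups $M$ of $G$ containing $Y$, where the last step uses $m_k(G,Y)\le k^{d}$. For $n\ge t+2$ we have $d-n\le -2$, hence $q(n)\le\sum_{k\ge 2}k^{-2}<1$; moreover, summing the geometric series,
$$\sum_{n\ge t+2}q(n)\ \le\ \sum_{k\ge 2}\sum_{n\ge t+2}k^{\,d-n}\ =\ \sum_{k\ge 2}\frac{k^{\,d-t-1}}{k-1}\ \le\ \sum_{k\ge 2}\frac{1}{k(k-1)}\ =\ 1 .$$
Bounding the remaining $t+2$ values $q(0),\dots,q(t+1)$ by $1$ gives $e(G,Y)\le(t+2)+1=\lceil\mathcal M(G,Y)\rceil+3$.

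For the lower bound one may assume $d>4$, the statement being vacuous otherwise. Choose $k\ge 2$ at which the supremum defining $d$ is attained (only finitely many $k$ matter), so that $m:=m_k(G,Y)=k^{d}$, and let $M_1,\dots,M_m$ be the maximal subgroups of index $k$ containing $Y$. The goal is a reverse estimate: to show that $q(n)$ stays close to $1$ — closely enough that $\sum_n\bigl(1-q(n)\bigr)$ remains bounded — for all $n$ up to roughly $d-4$; summing over these $\gtrsim d-3\ge\lceil\mathcal M(G,Y)\rceil-4$ values of $n$ then gives the claim. Following \cite{mosca} one analyses the events $A_i=\{x_1,\dots,x_n\in M_i\}$, for which $q(n)\ge P\bigl(\bigcup_i A_i\bigr)$, and estimates this probability from below by inclusion--exclusion,
$$q(n)\ \ge\ \sum_i P(A_i)-\sum_{i<j}P(A_i\cap A_j)\ =\ m\,k^{-n}-\sum_{i<j}[G:M_i\cap M_j]^{-n};$$
since $m\,k^{-n}=k^{\,d-n}$ is large for $n$ well below $d$, what must be shown is that the second sum does not cancel most of the first in that range. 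It is convenient here to pass to $\overline G=G/\bigcap_i\Core_G(M_i)$, a subdirect product of primitive groups of degree $k$, in which the overlaps $M_i\cap M_j$ are governed by the chief factors involved.

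The delicate point — and essentially the only place where real work, and ultimately the classification-based structure theory of finite primitive groups, enters — is the control of the pairwise overlaps $[G:M_i\cap M_j]^{-n}$: the trivial bound $[G:M_i\cap M_j]\ge 2k$ for $i\ne j$ is far too weak, since it would force $n\gtrsim d\log_2 k$ rather than $n\approx d$, so one must show that all but a controllably small proportion of the pairs $\{M_i,M_j\}$ satisfy $[G:M_i\cap M_j]\gg k$, exploiting that the $M_i$ are ``spread out'' over many chief factors of $G$. Since none of this analysis uses anything about the subgroups $M_i$ beyond their being maximal of the common index $k$, imposing the extra constraint $Y\subseteq M_i$ throughout changes nothing, and the estimates for $e(G,Y)$ drop out exactly as those for $e(G)$ in \cite{mosca}.
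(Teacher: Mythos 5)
Your proposal is correct and takes essentially the same route as the paper: the paper's entire proof of Theorem \ref{main} is the observation that the argument of \cite[Theorem 1]{mosca} goes through verbatim once ``maximal subgroup'' is replaced by ``maximal subgroup containing $Y$'' (exactly your opening reduction, since both $P_{G,Y}(n)$ and $m_n(G,Y)$ see only the maximal subgroups above $Y$), and your union-bound/geometric-series computation for the upper estimate is complete and is the standard one. Like the paper, you defer the genuinely hard lower-bound analysis to \cite{mosca}; your gloss of its internal mechanism (second-moment control of pairwise intersections) is a plausible but unverified reconstruction, yet the relativization claim you rest on is precisely the one the paper itself asserts without further detail.
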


If follows immediately:

\begin{cor}\label{bound}
	Let $G$ be a finite group. If $g\in G,$ then
	$$e(G)-e(G,g)\leq \mathcal M(G)-\mathcal M(G,g)+8.$$
\end{cor}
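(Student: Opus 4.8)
The plan is to read off the corollary as a formal consequence of Theorem~\ref{main}, applied twice: once with $Y=\emptyset$ to bound $e(G)$ from above, and once with $Y=\{g\}$ to bound $e(G,g)$ from below. Indeed, Theorem~\ref{main} gives
$$e(G)\leq \lceil \mathcal M(G)\rceil+3\qquad\text{and}\qquad e(G,g)\geq \lceil \mathcal M(G,g)\rceil-4,$$
so subtracting yields
$$e(G)-e(G,g)\leq \lceil \mathcal M(G)\rceil-\lceil \mathcal M(G,g)\rceil+7.$$
The point is that one must use the inequality in the ``unfavourable'' direction on each side (the upper estimate for $e(G)$ and the lower estimate for $e(G,g)$), since these are exactly the bounds that survive the subtraction.

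The second step is simply to get rid of the ceiling functions. Since $\lceil x\rceil<x+1$ for every real $x$ while $\lceil y\rceil\geq y$, one has $\lceil \mathcal M(G)\rceil-\lceil \mathcal M(G,g)\rceil\leq \mathcal M(G)-\mathcal M(G,g)+1$ (the left-hand side being a difference of integers, there is no loss in this rounding step). Substituting into the previous display gives
$$e(G)-e(G,g)\leq \mathcal M(G)-\mathcal M(G,g)+8,$$
which is the claim.

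I do not anticipate any genuine obstacle: the statement is a bookkeeping corollary of Theorem~\ref{main}, and the only care required is in the direction of the inequalities and in the additive constant produced by the two ceilings (one from the $+3$/$-4$ gap, one from rounding). As a consistency check one may note that every maximal subgroup containing $g$ is in particular a maximal subgroup, so $m_n(G,g)\leq m_n(G)$ for all $n$ and hence $\mathcal M(G,g)\leq \mathcal M(G)$; thus the right-hand side of the corollary is at most $8$ plus a nonnegative quantity, and the genuinely interesting question — how negative $\mathcal M(G,g)-\mathcal M(G)$ can be for a suitable $g$ — is precisely what the rest of the paper must address.
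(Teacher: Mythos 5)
Your proposal is correct and is exactly the argument the paper intends: the corollary is stated as an immediate consequence of Theorem~\ref{main}, obtained by combining the upper bound for $e(G)$ with the lower bound for $e(G,g)$ and absorbing the ceilings into the constant. Nothing further is needed.
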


Our first application of Corollary \ref{bound} is to show that the difference
$e(G)-e(G,g)$ can be arbitrarily large. More precisely we prove:

\begin{thm}\label{uno}For any positive integer $n,$ there exists a finite 2-generated group $G$ such that
	\begin{enumerate}
		\item $e(G)\geq n.$
		\item  $e(G,g)\leq 5$ for some $g\in G.$
		\end{enumerate}
\end{thm}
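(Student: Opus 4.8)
The plan is to deduce both assertions from Theorem~\ref{main}. To force $e(G)\ge n$ it suffices to build a $2$‑generated $G$ with $\mathcal M(G)\ge n+4$, and to force $e(G,g)\le 5$ it suffices to have $\mathcal M(G,g)\le 2$; so one wants a $2$‑generated group with very many maximal subgroups of one fixed index, together with an element lying in very few maximal subgroups. A large direct power of an alternating group of prime degree does the job. Fix $n$; let $k$ be a large prime (how large to be specified below), set $S=\alt(k)$, $t=k^{n+4}$ and $G=S^t$. By P.\ Hall's theorem $S^t$ is $2$‑generated as soon as $t$ does not exceed the number $\phi_2(S)$ of $\aut(S)$‑orbits on generating pairs of $S$; since $\phi_2(\alt(k))\to\infty$ (two uniformly random elements of $\alt(k)$ generate it with probability tending to $1$), $G$ is $2$‑generated once $k$ is large.

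Next I would bound $\mathcal M(G)$ from below using the classical description of the maximal subgroups of $S^t$: each is either of \emph{product type} $S^{i-1}\times M\times S^{t-i}$ with $M$ maximal in $S$ (of index $|S:M|$), or of \emph{diagonal type} $\{(x_1,\dots,x_t):x_j=\alpha(x_i)\}$ with $i\ne j$ and $\alpha\in\aut(S)$ (of index $|S|$). Already the product‑type subgroups whose distinguished factor is one of the $k$ point stabilisers $\alt(k-1)$ of $S$ yield $tk$ maximal subgroups of $G$ of index $k$, so $\mathcal M(G)\ge\log(tk)/\log k=n+5$, whence $e(G)\ge\lceil\mathcal M(G)\rceil-4\ge n$ by Theorem~\ref{main}. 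This gives (1).

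For (2) I would take $c$ to be a $k$‑cycle of $\alt(k)$ (an even permutation, since $k$ is odd) and $g=(c,\dots,c)\in G$. The crucial input is the classical fact, essentially due to Burnside, that a $k$‑cycle of $\alt(k)$ with $k$ prime lies in a \emph{unique} maximal subgroup of $\alt(k)$, namely $N_S(\langle c\rangle)$, which has index $(k-2)!$: a proper subgroup of $\alt(k)$ containing $c$ is transitive of prime degree $k$ and does not contain $\alt(k)$, hence by Burnside's theorem it is contained in $\AGL(1,k)$, so in $\AGL(1,k)\cap\alt(k)=N_S(\langle c\rangle)$, and maximality forces equality. It follows that a product‑type maximal subgroup of $G$ contains $g$ only when its distinguished factor is $N_S(\langle c\rangle)$ — exactly $t$ of them, all of index $(k-2)!$ — and a diagonal‑type one $\{x_j=\alpha(x_i)\}$ contains $g$ only when $\alpha(c)=c$, i.e.\ $\alpha\in C_{\aut(S)}(c)=\langle c\rangle$ — exactly $\binom{t}{2}k$ of them, all of index $|S|=k!/2$. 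Hence $m_j(G,g)=0$ unless $j\in\{(k-2)!,\,k!/2\}$, while $m_{(k-2)!}(G,g)=t$ and $m_{k!/2}(G,g)=\binom{t}{2}k\le k^{2n+9}$; since $\log t=(n+4)\log k$ and $\log\binom{t}{2}k\le(2n+9)\log k$, whereas $\log((k-2)!)$ and $\log(k!/2)$ grow like $k\log k$, one gets $\mathcal M(G,g)\to 0$ as $k\to\infty$. Choosing $k$ large enough that $k^{n+4}\le\phi_2(\alt(k))$ and $\mathcal M(G,g)\le 2$, Theorem~\ref{main} gives $e(G,g)\le\lceil\mathcal M(G,g)\rceil+3\le 5$.

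The one genuinely delicate step is this last count of the maximal subgroups through $g$; it rests only on the two structural facts quoted — the shape of the maximal subgroups of a direct power of a nonabelian simple group, and the uniqueness of the maximal overgroup of a $k$‑cycle in $\alt(k)$ for $k$ prime — while the rest is bookkeeping with the inequalities of Theorem~\ref{main} together with Hall's generation theorem and the asymptotics of $\phi_2(\alt(k))$.
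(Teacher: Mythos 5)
Your construction is essentially the paper's (a large direct power of $\alt(k)$, $k$ prime, with $g$ a constant tuple of a $k$-cycle, and both bounds read off from Theorem~\ref{main}), but there is a genuine gap at exactly the step you flag as delicate. You claim that for any large prime $k$ a $k$-cycle $c$ lies in a unique maximal subgroup of $\alt(k)$, arguing: a proper subgroup containing $c$ is transitive of prime degree and does not contain $\alt(k)$, \emph{hence by Burnside's theorem it is contained in} $\AGL(1,k)$. That is not what Burnside's theorem gives: it says a transitive group of prime degree is either $2$-transitive or contained in $\AGL(1,k)$, and the $2$-transitive alternative really occurs. By the CFSG classification of transitive groups of prime degree, whenever $k=(q^d-1)/(q-1)$ (e.g.\ $k=13,31,127$ and all Mersenne primes) the group $\psl(d,q)$ is a $2$-transitive proper subgroup of $\alt(k)$ containing $k$-cycles, and the same happens for $k=11,23$ ($\psl(2,11)$, $M_{11}$, $M_{23}$). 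For such $k$ your uniqueness claim is false, so your count of $\text{Max}$'s through $g$ --- and hence the bound $\mathcal M(G,g)\le 2$ --- is not justified as written; note also that ``$k$ large'' alone cannot rescue the statement, since arbitrarily large primes of the excluded shape may exist (Mersenne primes, conjecturally infinitely many).

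This is precisely the point the paper treats with care: it restricts to the set $\mathcal P$ of primes $p\ge 7$ with $p\notin\{11,23\}$ and $p\ne (q^d-1)/(q-1)$, cites P\'alfy for the fact that then $N_S(\langle\sigma\rangle)\cong\aff(1,p)$ is the unique maximal subgroup of $\alt(p)$ containing the $p$-cycle, and observes that the excluded primes have density zero, so $\mathcal P$ is infinite and one may take $p\in\mathcal P$ as large as needed. Your argument is repaired by imposing the same arithmetic condition on $k$ (and quoting the classification rather than Burnside), after which the rest of your bookkeeping --- the $2$-generation of $S^t$ via Hall/Menezes--Quick--Roney-Dougal, the lower bound from the $tk$ point-stabiliser-type maximal subgroups of index $k$, the count $t$ of product-type and $\binom{t}{2}k$ of diagonal-type maximal subgroups through $g$, and the application of Theorem~\ref{main} --- goes through; alternatively one would have to add a separate estimate for the exceptional maximal overgroups of $c$, which you have not done. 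A cosmetic difference from the paper: you take the power $t=k^{n+4}$ with $n$ fixed, whereas the paper takes $t=(p-1)!$ so that $e(G_p)$ grows with $p$; both choices are compatible with the $2$-generation constraint and with the final inequalities.
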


The definition of $e(G,Y)$ can be extended to the case of a (topologically) finitely
generated profinite group $G.$ 
If we denote with $\mu$ the
normalized Haar measure on $G$, so that $\mu(G)=1$, the
probability that $k$ random elements generate (topologically) $G$ together with the elements of a given subset $Y$ of $G$ is defined as
$$P_{G,Y}(k)=\mu(\{ (x_1,\ldots,x_k)\in G^k \mid \langle x_1,\ldots,x_k, Y\rangle
=G\}),$$ where $\mu$ denotes also the product measure on $G^k$. As it is proved for example in \cite[Section 6]{mon}, 
$e(G)=\sup_{N\in \mathcal N}e(G/N),$ where $\mathcal N$ is the set of the open normal subgroups of $G$. In the same way it can be easily proved that
\begin{equation*}e(G,Y)=\sup_{N\in \mathcal N}e(G/N,YN/N).
\end{equation*}
This implies that the inequalities in Theorem \ref{main} still hold if $G$ is a finitely generated profinite group. Recall that a profinite group $G$ is said to be positively finitely generated, PFG for
short, if $P_G(k)$ is positive for some natural number $k$ and 
that $G$ is said to have polynomial maximal subgroup growth  if $m_n(G) \leq \alpha n^\sigma$ for all $n$ (and for some constants $\alpha$ and $\sigma$). A celebrated result of Mann and Shalev \cite{ms} says that a profinite group is PFG if and only if it has polynomial maximal subgroup growth. Moreover, by Theorem \ref{main}, $G$ is PFG if and only if $e(G)< \infty.$ A strongly generating element in a profinite group $G$ which is not PFG can be defined as an element $g$ with the property that $e(G,g)$ is finite. As an immediate corollary of Theorem \ref{uno} and its proof we  will 
 deduce the following result.

\begin{cor}\label{coro}There exists a 2-generated profinite group $G$ such that $e(G)=\infty,$ but
	$e(G,g)\leq 5$ for some $g\in G.$
\end{cor}

An analogous of Theorem \ref{uno} cannot hold if we only consider finite soluble groups. Indeed denote by $d(G)$ the smallest cardinality of a generating set of $G.$ By \cite[Corollary 14]{mon}, there exists an absolute constant $\alpha$ such that, for any finite soluble group $G$, $e(G)\leq   \left\lceil \gamma d(G)  \right\rceil+\alpha,$ where
$\gamma \backsimeq 3.243$ is  the P\`{a}lfy-Wolf constant. Clearly, for any $g\in G,$ we have $e(G,g)\geq d(G)-1,$ so
 $e(G)\leq \gamma(e(G,g))+\gamma+\alpha+1$ and
\begin{equation*}
        e(G)-e(G,g)\leq \left\lceil \gamma d(G)  \right\rceil+\alpha - (d(G)-1)\leq (\gamma-1)d(G)+\alpha+2.
	\end{equation*}
In particular, if $e(G,g)$ is bounded, then $e(G)$ cannot be arbitrarily large. This does not exclude that a finite soluble group can contain strongly generating elements. Indeed the difference 
 $e(G)-e(G,g)$ could  increase  in function of $d(G).$ It can be immediately seen that this cannot be the case  if we consider finite nilpotent groups. Indeed if $G$ is a finite nilpotent group, then $e(G)\leq d(G)+2.1185$ (see for example \cite[Corollary 2]{pome}), and therefore $e(G)-e(G,g)\leq 3.1185.$ The same argument cannot be applied in the class of finite metabelian groups. Indeed it follows from \cite[Corollary 15]{mon} that for any $d\geq 2$ it is possible to construct a $d$-generated metabelian group $G$ with $e(G)\sim 2d.$ Nevertheless, a more careful computation shows that even in the class of finite metabelian group, we cannot have strongly generating elements. More precisely we prove the following more general statement. 
 \begin{thm}\label{meta}If the derived subgroup of a finite group $G$ is nilpotent, then 
 	$$e(G)-e(G,g)\leq 11$$ for every $g \in G.$
 \end{thm}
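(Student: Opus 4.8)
The plan is to control $e(G)$ and $e(G,g)$ through $\mathcal M(G,Y)$ using Corollary~\ref{bound}, so it suffices to show that $\mathcal M(G)-\mathcal M(G,g)\le 3$ for any $g\in G$ when $G'$ is nilpotent; combined with the $+8$ from Corollary~\ref{bound} this gives the bound $11$. Thus the whole problem reduces to a statement about counting maximal subgroups of a given index that do (resp. do not) contain a fixed element.

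First I would recall the structure of maximal subgroups $M$ of a finite group $G$ with $G'$ nilpotent. Each such $M$ has prime power index $p^a$ and, after passing to $G/\Core_G(M)$, one is looking at a primitive group with an abelian socle (since the socle is a minimal normal subgroup contained in the nilpotent $G'$, hence elementary abelian); so $G/\Core_G(M)\cong V\rtimes H$ with $V$ an $\mathbb F_p H$-module and $M/\Core_G(M)$ a complement to $V$. The maximal subgroups of $G$ of index $n=p^a$ therefore fall into ``classes'' indexed by the chief factors of $G$ of that order together with, for each such factor $V$, the set of complements to $V$ in the corresponding quotient, which is a coset of $\mathrm{Der}(H,V)\cong Z^1(H,V)$, i.e. an $\mathbb F_p$-space of dimension $\dim_{\mathbb F_p}V$ at most. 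The key numerical fact, already implicit in \cite{mon} and the Pálfy--Wolf circle of ideas, is that the number $m_n(G)$ of maximal subgroups of index $n=p^a$ is at most (roughly) $n^{\,c}$ for a small absolute $c$, because the number of relevant chief factors is polynomially bounded and, for a single chief factor $V$ of order $n$, the number of complements is at most $|V|^{\,\gamma'}$ for a constant $\gamma'$ tied to the Pálfy--Wolf constant $\gamma$. This forces $\mathcal M(G)\le \gamma$ or so; the real content is that the \emph{excess} of $m_n(G)$ over $m_n(G,g)$ is small.

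The heart of the argument is then the following: for a fixed $g$ and a fixed index $n=p^a$, I want to compare the number of maximal subgroups of index $n$ with the number of those containing $g$. Working one chief factor $V$ at a time, a maximal subgroup $M$ of index $n$ in the corresponding quotient $G/\Core_G(M)=V\rtimes H$ contains (the image of) $g$ if and only if the image $\bar g$ lies in the complement $M/\Core_G(M)$. Writing $\bar g=(v,h)$ with $v\in V$, $h\in H$, and parametrising complements as $\{(\,\delta(k)+k\,) : k\in H\}$ for $\delta\in Z^1(H,V)$, the condition $\bar g\in M/\Core$ becomes the affine-linear equation $\delta(h)=v$ in $\delta$. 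This cuts the $\mathbb F_p$-space $Z^1(H,V)$ down by a subspace of codimension at most $\dim_{\mathbb F_p}V/[V,h]$ (the number of independent coordinates of $\delta(h)$), so $m_n(G,g)$ restricted to this class is at least $m_n(G)$ restricted to this class divided by $|V|^{\,?}$; to make this useful I instead bound the ratio $m_n(G)/m_n(G,g)$ by a \emph{fixed power of $n$} uniformly. The cleanest route: show $\log m_n(G)-\log m_n(G,g)\le 3\log n$ for every $n\ge 2$ by establishing, for each chief-factor class, that the complements \emph{not} containing $g$ number at most $|V|^{3}$ times those containing $g$, unless there are none containing $g$ at all, in which case one shows that class contributes a negligible, separately-bounded amount to $m_n(G)$ (at most $|V|^{1}$ many, say, because then $g$ forces a constraint killing the chief factor). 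Summing the geometric-type estimates over the boundedly-many chief-factor classes of each order $n$ and taking $\sup_n$ of $(\log m_n(G)-\log m_n(G,g))/\log n$ yields $\mathcal M(G)-\mathcal M(G,g)\le 3$.

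The main obstacle I anticipate is the bookkeeping when a whole chief-factor class contributes to $m_n(G)$ but \emph{no} maximal subgroup in that class contains $g$: there the naive ratio is infinite, and one must argue that the total number of such ``$g$-free'' maximal subgroups, summed over all $n$ with the correct $n$-weighting, is absorbed into a constant (contributing only an additive error to $e(G)$, hence already handled by the $+8$ slack, or forcing the extra $+3$). Making the constant genuinely absolute — independent of $d(G)$, the number of chief factors, and the fields involved — is where the Pálfy--Wolf bound on $\dim_{\mathbb F_p}Z^1(H,V)$ in terms of $\dim_{\mathbb F_p}V$ for $H$ acting irreducibly (together with a reduction from general chief factors to irreducible situations via the module structure) must be invoked carefully; this is the only genuinely nontrivial input, and it is exactly what fails for non-metabelian, or more precisely non-``$G'$ nilpotent'', groups, consistent with Theorem~\ref{uno}.
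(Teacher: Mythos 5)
Your reduction to comparing $\mathcal M(G)$ with $\mathcal M(G,g)$ is the right opening move (it is how the paper starts, via Corollary \ref{bound}), and your per-chief-factor comparison of maximal subgroups containing $g$ with those avoiding $g$ is essentially the paper's estimate for the crowns of large multiplicity: by the Cossey--Gruenberg--Kov\'acs lemma the submodule generated by a single element is at most $V^{r_G(V)}$, so whenever $\delta_G(V)>r_G(V)$ one gets $|\mathrm{Max}(G,V)|\le |\mathrm{Max}(G,g,V)|\,n^2$. The genuine gap is exactly at the point you flag yourself: the classes containing \emph{no} maximal subgroup through $g$. Your proposed fix --- that each such class contributes at most $|V|$ maximal subgroups and that the total is ``absorbed into a constant'' / an additive error covered by the slack --- fails, because although each multiplicity-one crown of order $n$ does contribute only about $n$ maximal subgroups, the \emph{number} of such crowns of a fixed order $n$ is not bounded: for $G=\bigl(\prod_{\chi}V_\chi\bigr)\rtimes C_q^{\,d}$ with $V_\chi$ running over the nontrivial characters into $\mathbb F_n^{*}$, there are about $(n-1)^d$ of them, so the $g$-free contribution to $m_n(G)$ is about $n\,(n-1)^d$, which grows with $d(G)$ and cannot be hidden in an absolute constant. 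Consequently no class-by-class ratio argument alone can bound $\mathcal M(G)-\mathcal M(G,g)$; and indeed the paper's Section 4 construction (nilpotent but nonabelian acting group) shows that precisely this kind of $g$-free contribution can make $e(G)-e(G,g)$ grow linearly in $d$, so an argument that never uses the hypothesis beyond ``abelian socle in primitive quotients'' would prove too much.

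What the paper does in that problematic case is structurally different and is the ingredient missing from your proposal: after reducing to $\frat(G)=1$, the hypothesis that $G'$ is nilpotent forces the complement of the Fitting subgroup to be \emph{abelian}, hence $r_G(V)=1$ for every relevant $V$ and the $g$-free-able crowns are exactly those with $\delta_G(V)=1$; since a one-dimensional module structure corresponds to a homomorphism $H\to\mathbb F^{*}$, there are at most $(n-1)^{d(G)}$ such crowns of order $n$, so if they dominate at a dominant index one gets the absolute bound $e(G)\le d(G)+5$, and then the trivial inequality $e(G,g)\ge d(G)-1$ (not any comparison of maximal subgroup counts) closes the gap. Your proposal never invokes $d(G)$ or $e(G,g)\ge d(G)-1$, which is why it cannot be completed as written; also, the side assertions that $\mathcal M(G)\le\gamma$ and that P\'alfy--Wolf-type bounds on $\dim Z^1(H,V)$ are the key input are off target ($\mathcal M(C_p^d)\approx d-1$, and the relevant tools are Gasch\"utz's crown theory plus the single-generator submodule bound), though these are not the fatal issue.
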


The previous theorem could cast doubt on the existence of strongly generating elements in finite soluble groups, but in the last section of the paper we prove that they  can occur. Indeed we prove:

\begin{thm}\label{strongsol}
For every positive integer $d\geq 2,$ there exists a finite soluble group $G$ with $d(G)=d$ and containing an element $g$ such that
$e(G)-e(G,g) \sim 0.262 \cdot d-8.$
\end{thm}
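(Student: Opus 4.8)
The plan is to build the group $G$ as a carefully chosen extension so that the structure of maximal subgroups is forced, and then invoke Corollary~\ref{bound} together with Theorem~\ref{main} to translate statements about $\mathcal M(G)$ and $\mathcal M(G,g)$ into the desired inequality on $e(G)-e(G,g)$. Concretely, I would look for a group of the form $G = V \rtimes H$, where $H$ is a fixed small soluble group acting irreducibly on several copies of an $H$-module, so that the maximal subgroups of $G$ come in two flavours: the complements to the various ``diagonal'' submodules (these are the subgroups not containing $V$, and they are numerous, governing $\mathcal M(G)$), and a bounded number of subgroups containing $V$ (governing the part of $\mathcal M$ not killed by adjoining $g$). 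The element $g$ would be chosen inside a complement, specifically an element whose normal closure together with generic elements quickly hits $V$, so that $m_n(G,g)$ is dramatically smaller than $m_n(G)$ for the relevant index $n$.

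The key computation is the counting of maximal subgroups of prime-power-ish index $n$ in a crown-based power. Following the construction underlying \cite[Corollary 15]{mon} and the metabelian examples with $e(G)\sim 2d$, I would take $G$ to be (a quotient of) the $k$-fold crown-based power $L_k$ associated with a monolithic primitive soluble group $L$ whose socle is an elementary abelian $p$-group of order $q = p^r$ on which the complement acts. For such $L_k$ one has $d(L_k)\sim k/\log_q(\text{something})$ — more precisely $d(L_k)$ grows like $k / c$ where $c$ depends on $L$ — and $m_n(L_k)$ at the index $n = |L:\text{stabilizer}|$ grows like the number of surjections from a free module onto $k$ copies of the socle, which is roughly $q^{k d}$, so that $\mathcal M(L_k) \sim k \cdot (\text{const})$. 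The point is to choose $L$ so that the ratio $\mathcal M(L_k)/d(L_k)$, and hence after passing through the Corollary~\ref{bound} estimate $e(G)/d(G)$, sits a definite amount above $1$: the constant $0.262$ will emerge as $(\text{that ratio}) - 1$, optimized over a small family of candidate groups $L$. I would set $d(G)=d$ by truncating $k$ appropriately, and then pick $g$ to be an element lying outside $\Phi(G)$ but inside enough maximal subgroups that $\mathcal M(G,g)$ collapses to $d(G)-1 + O(1)$, giving $e(G,g) \leq d(G) + O(1)$ by Theorem~\ref{main}, whereas $e(G) \geq \lceil\mathcal M(G)\rceil - 4 \sim 1.262\, d$.

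In more detail, the steps I would carry out are: (1) fix a monolithic primitive soluble group $L$ with abelian socle $A$ of order $q$ and complement $H$, record the precise asymptotics of $d(L_k)$ and of $m_n(L_k)$ as functions of $k$ (the former via the Gasch\"utz/crown formula $d(L_k) = \lceil (k + d_H\cdot\text{stuff})/\dim_{\text{End}}A \rceil$-type expression, the latter by counting complements to submodules), and deduce $\mathcal M(L_k)$; (2) optimize the choice of $L$ over small cases so that $\mathcal M(L_k)/k$ and $d(L_k)/k$ have ratio achieving $\geq 1.262$, producing the constant $0.262$; (3) for a target $d$, choose the truncation $k = k(d)$ so that $d(G)=d$ exactly, possibly adjoining a small fixed group to correct the generator number; (4) identify an element $g\in G$ — a generator of the complement acting fixed-point-freely, or an element of the socle in one coordinate — such that every maximal subgroup of $G$ of the ``numerous'' type either contains $g$ or, together with $g$, is no longer maximal, so that $m_n(G,g)$ is polynomially bounded in $n$ of degree only $d(G)-1+O(1)$; (5) apply Theorem~\ref{main} to $e(G)$ and to $e(G,g)$ and subtract, absorbing the additive constants into the stated $-8$.

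The main obstacle I anticipate is step~(4): showing that a single element $g$ genuinely kills the high-degree part of the maximal-subgroup growth. For the diagonal-complement maximal subgroups, containing $g$ is a linear condition, and a generic $g$ fails it for almost all of them — so the naive element does not work. The right choice is subtler: $g$ must be an element of $V = A^k$ (or of a complement) lying in a \emph{common} submodule position so that $\langle g, M\rangle$ escapes every complement $M$ that does not already contain the submodule generated by $g$; this forces $g$ to generate, modulo the other maximal-subgroup conditions, a chief-factor's worth of $V$, collapsing the count by exactly one ``layer'' and replacing $d(G)$ by $d(G)-1$ in the exponent. Verifying that the collapse is by precisely the amount needed — and not more (which would be fine) nor less (which would be fatal) — requires a careful bookkeeping of the crown structure and of how $g$ interacts with the Frattini quotient; this is where the bulk of the technical work lies, and it is essentially a refinement of the counting already present in \cite{mon}, \cite{mosca}.
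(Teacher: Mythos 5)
Your construction cannot work, and the obstruction is proved in this very paper. You propose to realize the growth of $m_n(G)$ inside a crown-based power $L_k$, i.e.\ inside a single irreducible module $A$ occurring with large multiplicity $\delta_G(A)=k$ (all the ``numerous'' maximal subgroups being complements $WH^w$ with $W$ a maximal submodule of $A^k$). But when $\delta_G(V)>r_G(V)$, inequality (\ref{quo}) shows that \emph{every} element $g$ lies in at least $\bigl(q_G(V)^{\delta_G(V)-r_G(V)}-1\bigr)/(q_G(V)-1)\cdot |V|^{\theta_G(V)}$ of these maximal subgroups, because the $H$-submodule generated by a single vector has composition length at most $r_G(V)$; hence by (\ref{cippo})--(\ref{piumeno}) and Lemma \ref{amelia} one gets $\log m_n(G,g)/\log n\geq \log m_n(G)/\log n-3$ for the dominant index $n$, and Corollary \ref{bound} then bounds $e(G)-e(G,g)$ by an absolute constant ($\leq 11$), independently of $d$. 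So your step (4) is not merely the hard part: no choice of $g$ (in the socle, in a complement, ``diagonal'', or otherwise) can collapse the polynomial degree of a single high-multiplicity crown by more than $O(1)$, and the linear-in-$d$ gap you aim for is unreachable along this route. Your own worry about step (4) is exactly this phenomenon, but your proposed fix (an element generating ``a chief-factor's worth of $V$'') only removes one layer out of $k$, which changes the count by a bounded factor.

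The paper's proof avoids this by making the excess growth come from \emph{exponentially many pairwise non-isomorphic} modules, each with $\delta_G(V_i)=r_G(V_i)=1$, i.e.\ from the part $\mathcal V_n^{\circ}$ that escapes (\ref{quo}). Concretely: $H$ is the largest $d$-generated subdirect power of the semidihedral group $X$ of order $16$ (a Sylow $2$-subgroup of $\GL(2,3)$), which admits $\rho\geq 3\cdot 16^{d-1}/8$ distinct kernels of epimorphisms onto $X$; one forms $G_d=\prod_i V_i\rtimes H$ with $V_i\cong C_3\times C_3$ and pairwise distinct kernels $N_i$, so $m_9(G_d)\geq\rho$ gives $\mathcal M(G_d)\gtrsim d\log 16/\log 9$, while $d(G_d)=d(H)=d$ by Gasch\"utz. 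Since each $V_i$ occurs with multiplicity one, the index-$9$ maximal subgroups are just the nine complements of each $V_i$, and the single element $g=(v_1,\dots,v_\rho)$ with every $v_i\neq 0$ lies in none of them: $m_9(G_d,g)=0$, every maximal subgroup containing $g$ contains $\prod_i V_i$, and $e(G_d,g)\leq e(H)\leq d+2$ because $H$ is a $2$-group. The constant $0.262$ is then simply $(\log 16-\log 9)/\log 9$, i.e.\ it comes from the numerical fact $|X|=16>9=|V_i|$, not from an optimization over crown-based powers. If you want to salvage your plan, this is the structural change you need: many non-isomorphic crowns of multiplicity one rather than one crown of large multiplicity.
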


\section{Proofs of Theorem \ref{uno} and Corollary \ref{coro}}

Let $\mathcal P$ be the set of the primes $p\geq 7$ with the property that
$p\notin \{11,23\}$ and there is no prime power $q$ and positive integer $d$ with $p=(q^d-1)/(q-1).$ It is a formidable open problem in Number Theory whether  there are infinitely many primes $p$ which do not belong to $\mathcal P.$ However it can be easily seen that the set of these primes have density 0. This implies in particular that $\mathcal P$ is infinite.

Let $S=\alt(p),$ with $p\in \mathcal P,$ and $G_p=S^{(p-1)!}$. If follows from the results presented in \cite{mena} that $S^t$ is 2-generated if and only if $t\leq P_S(2)|S|^2/|\aut S|.$ In particular if $S=\alt(p),$ then $P_S(2)\geq P_{\alt(7)}(2)=\frac{229}{315},$ so $(p-1)!\leq \frac{229}{315}\frac{p!}{4}$
and therefore $S^{(p-1)!}$ is 2-generated.
Notice that $G_p$ contains $(p-1)!$ normal subgroups $S_1,\dots,S_{(p-1)!}$ with $S_i\cong S.$ For each $1\leq t\leq (p-1)!$ there is an epimorphism $\pi_t: G_p \to S$ with $\ker(\pi_t)=R_t,$ being $R_t=\prod_{j\neq t}S_j.$ Since $S$ contains $p$ maximal subgroups $M_1,\dots,M_p$ of index $p,$ $G_p$ contains $p\cdot (p-1)!$ maximal subgroups of index $p$ (namely $\pi_t^{-1}(M_u)$ for $1\leq t\leq (p-1)!$ and  $1\leq u\leq p$) so
$$e(G_p)\geq \mathcal M(G_p) - 4\geq \frac{\log m_p(G_p)}{\log p}-4=\frac{\log \left(p\cdot (p-1)!\right)}{\log p}-4\geq p\left(1-\frac{\log e}{\log p}\right)-4.$$

Now we set $g_p=(\sigma,\dots,\sigma),$ where $\sigma=(1,2,\dots,p)\in \alt(p)$, and we want to estimate $e(G_p,g_p).$ First recall that a maximal subgroup $M$ of $G$ can be of two possible kinds:
\begin{enumerate}
	\item There exists a maximal subgroup $H$ of $S$ and $1\leq t \leq (p-1)!$ such that $M=\pi_t^{-1}(H).$
	\item There exist $1\leq r < s\leq (p-1)!$ and $\alpha\in \aut(S)$ such that
	$M=\{x \in G_p \mid x^{\pi_s}=(x^{\pi_r})^\alpha\}.$
\end{enumerate}
If $p \in \mathcal P,$ then $N_S(\langle \sigma \rangle)\cong \aff(1,p)$ is the unique maximal subgroup of $S_p$ containing $\sigma$ (see for example \cite[Corollary 3]{ppp}), so there are precisely $(p-1)!$ maximal subgroups $M$ of  type (1) containing $g_p$, and $|G_p:M|=|S:\aff(1,p)|=(p-2)!$ for each of them. A maximal subgroup of the second type which contains $g_p$ is of the form $\{x \in G_p \mid x^{\pi_s}=(x^{\pi_r})^\alpha\}$ where $\sigma=g_p^{\pi_s}=(g_p^{\pi_r})^\alpha=\sigma^\alpha$ and has index $|S|.$ We have $
\binom{(p-1)!}{2}$ choices for the pair $(r,s)$ and $|C_S(\sigma)|=p$ choices for $\alpha.$ Summarizing $m_n(G_p,g_p)=0$ if $n\notin \{(p-2)!, p!/2\}$ and
$$
\begin{aligned}
\frac{\log \left(m_{(p-2)!}(G_p,g_p)\right)}{\log \left((p-2)! \right)}&=\frac{\log \left((p-1)! \right)}{\log \left((p-2)! \right)}\leq 2 \\
\frac{\log \left(m_{p!/2}(G_p,g_p)\right)}{\log \left(\frac{p!}2 \right)}&=
\frac
{\log\left(\frac{p\cdot (p-1)! \cdot \left((p-1)!-1 \right)}{2}\right)}
{\log\left(\frac{p!}{2}\right)}\leq 2
.
\end{aligned}$$
In particular $$e(G_p,g_p) \leq \lceil \mathcal M(G_p,g_p)\rceil+3\leq 5$$
and this concludes the proof of Theorem \ref{main}.

Now, let  $G_p$ and $g_p$ as defined above and let $G=\prod_{p\in \mathcal P}G_p$ and $g=(g_p)_{p\in \mathcal P}.$ For any $p \in \mathcal P,$ let $\pi_p: G\to G_p$ be the corresponding projection. 
We have $e(G)=\sup_{p\in \mathcal P}e(G_p)=\infty.$ Let $M$ be a maximal subgroup of $G$ containing $g.$ By the Goursat's Lemma (see for example \cite[4.3.1]{scott}), there exists $p \in \mathcal P$ and a maximal subgroup $L$ of $G_p$ containing $g_p$, such that $M=L \times \prod_{q\neq p}G_q.$ In particular $|G:M|=|G_p:L|\in \{(p-2)!,p!/2\}.$ Notice that if $p_1\neq p_2$ then
$\{(p_1-2)!,p_1!/2\}\cap \{(p_2-2)!,p_2!/2\}=\emptyset.$ If follows
$\mathcal M(G,g)=\sup_{p\in \mathcal P}\mathcal M(G_p,g_p)\leq 2,$ so $e(G,g)\leq 5.$ This proves Corollary \ref{coro}.

\section{Soluble groups}\label{solo}

Let $G$ be a finite soluble group and $M$ a maximal subgroup of $G$. Denote by $Y_M=\bigcap_{g\in G}M^g$ the normal core of $M$ in $G$ and by $X_M/Y_M$ the socle of the primitive permutation group $G/Y_M$: clearly $X_M/Y_M$ is a chief factor of $G$ and $M/Y_M$ is a complement of $X_M/Y_M$ in $G/Y_M.$ Let $\text {Max}(G)$ be the set of  maximal subgroups of $G,$ let $\mathcal V(G)$ be a set of representatives of the irreducible $G$-modules that are $G$-isomorphic to some complemented chief factor of $G$ and,
for every $V\in \mathcal V(G),$  let $\text{Max}(G,V)$ be the set of  maximal subgroups $M$ of $G$ with $X_M/Y_M\cong_G V.$

Recall some results by Gasch\"utz \cite{gaz}. Given $V\in\mathcal V(G)$, let $$R_G(V)  =\bigcap_{M\in\text{Max}(G,V)}M.$$
It turns out that $R_G(V)$ is the smallest normal subgroup contained
in $C_G(V)$ with the property that $C_G(V)/R_G(V)$ is
$G$-isomorphic to a direct product of copies of $V$ and that it has a
complement in $G/R_G(V)$. The factor group $C_G(V)/R_G(V)$ is
called the $V$-crown of $G$. The non-negative integer
$\delta_G(V)$ defined by $C_G(V)/R_G(V)$ $\cong_G V^{\delta_G(V)}$ is
called the $V$-rank of $G$ and it coincides with the number of
complemented factors in any chief series of $G$ that are
$G$-isomorphic to $V$ (see for example \cite[Section 1.3]{classes}). In particular $G/R_G(V)\cong V^{\delta_G(V)}\rtimes H,$ with $H\cong G/C_G(V).$
Let $\mathbb F_V=\End_G(V)$, $q_G(V)=|\mathbb F_V|$ and $r_G(V)=\dim_{\mathbb F_V}V.$ The maximal subgroups in $\text{Max}(G,V)$ are in bijective correspondence with the maximal supplements of $V^{\delta_G(V)}$ in $V^{\delta_G(V)}\rtimes H,$ i.e. with the subgroups of
$V^{\delta_G(V)}\rtimes H$ of the form $W H^w$ with $W$ a maximal $H$-submodule of $V^{\delta_G(V)}$ and $w\in V^{\delta_G(V)}$. The number of choices for $W$ are $(q_G(V)^{\delta_G(V)}-1)/(q_G(V)-1)$ (see for example \cite[Lemma 2.5]{dvl}) so
\begin{equation}\label{qui}|\text{Max}(G,V)|=\left(\frac{q_G(V)^{\delta_G(V)}-1}{q_G(V)-1}\right)|V|^{\theta_G(V)},
	\end{equation}
with $\theta_G(V)=0$ if $G$ centralizes $V,$ $\theta_G(V)=1$ otherwise. 
Now we want to count the number of maximal subgroups $M \in \text{Max}(G,V)$ containing a given element $g.$ Let us denote by $\text{Max}(G,g,V)$ this set. This task corresponds to count the number of maximal supplements of $V^{\delta_G(V)}$ in $V^{\delta_G(V)}\rtimes H,$ containing a given element $x.$ Suppose $x=wh,$ with $w\in V^{\delta_G(V)}$ and $h\in H.$ Then a maximal subgroup of $V^{\delta_G(V)}\rtimes H$ containing $x$ is of the form $WH^v$ with $v \in V^{\delta_G(V)},$ $W$ a maximal $H$-submodule of $V^{\delta_G(V)}$ and $w+[h^{-1},v]\in W.$ The multiplicity of $V$ in
$\mathbb F_VH/J(\mathbb F_VH)$ (where $\mathbb F_VH$ denote the group algebra and $J(\mathbb F_VH)$ its Jacobson radical) is
$r_G(V)$ so it follows from \cite[Lemma 1]{cgk} that the $H$-submodule of $V^{\delta_G(V)}$ generated by
one of its elements is isomorphic to $V^k$ with $k\leq r_G(A).$
So, if $\delta_G(V) > r_G(V),$ then for any choice of $v,$ we have at least
$(q_G(V)^{\delta_G(V)-r_G(V)}-1)(q_G(V)-1)$ maximal $H$-submodules $W$ containing $w+[h^{-1},v],$ and therefore
\begin{equation}\label{quo}|\text{Max}(G,g,V)|\geq \left(\frac{q_G(V)^{\delta_G(V)-r_G(V)}-1}{q_G(V)-1}\right)|V|^{\theta_G(V)}.
\end{equation}
Combining (\ref{qui}) and (\ref{quo}), it follows that if $\delta_G(V)>r_G(A),$ then
\begin{equation}\label{cippo}
|\text{Max}(G,V)|\leq |\text{Max}(G,g,V)|\cdot q_G(V)^{2r_G(V)}= |\text{Max}(G,g,V)|\cdot n^2.
\end{equation}
Let $\mathcal V_n(G):=\{V \in \mathcal V(G) \mid |V|=n\}$ and $\mathcal V_n^\circ(G)=\{ V \in \mathcal V_n(G) \mid \delta_G(V)\leq r_G(V)\}.$ Moreover
define
 $$\mu^+_n(G)=\sum_{V\in \mathcal V_n(G) \setminus \mathcal V_n^\circ(G)}|\text{Max}(G,V)|, \quad \mu_n^\circ(G)=\sum_{V\in \mathcal V_n^\circ(G)}|\text{Max}(G,V)|.$$
Notice that, by (\ref{cippo}), we have:
\begin{equation}\label{piumeno}
\mu^+_n(G) \leq m_n(G,g)n^2.
\end{equation}

\begin{lemma}\label{amelia}
	If $\mu^+_n(G)\geq  \mu_n^\circ(G)$, then, for every $g\in G,$
	$$\frac{\log m_n(G,g)}{\log n}\geq \frac{\log m_n(G)}{\log n}-3.
	$$
\end{lemma}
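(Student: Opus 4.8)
The plan is to relate the two quantities $m_n(G,g)$ and $m_n(G)$ by splitting the maximal subgroups of index $n$ according to the associated irreducible module $V\in\mathcal V_n(G)$, and by using the hypothesis $\mu^+_n(G)\geq\mu_n^\circ(G)$ to control the contribution of the ``bad'' modules in $\mathcal V_n^\circ(G)$, for which inequality \eqref{cippo} is not available. First I would record that, by the case analysis of maximal subgroups of a soluble group into the crown description recalled above,
$$
m_n(G)=\sum_{V\in\mathcal V_n(G)}|\text{Max}(G,V)|=\mu^+_n(G)+\mu_n^\circ(G),
$$
so that the hypothesis gives immediately $m_n(G)\leq 2\mu^+_n(G)$. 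Then, combining this with \eqref{piumeno}, namely $\mu^+_n(G)\leq m_n(G,g)\,n^2$, I obtain
$$
m_n(G)\leq 2\,m_n(G,g)\,n^2.
$$

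From here the bound is a routine logarithmic manipulation. Taking logarithms,
$$
\log m_n(G)\leq \log 2+\log m_n(G,g)+2\log n,
$$
hence
$$
\frac{\log m_n(G)}{\log n}\leq \frac{\log m_n(G,g)}{\log n}+\frac{\log 2}{\log n}+2.
$$
Since $n\geq 2$ we have $\log 2/\log n\leq 1$, so the right-hand side is at most $\dfrac{\log m_n(G,g)}{\log n}+3$, which rearranges to the claimed inequality. One should also dispose of the degenerate case $m_n(G)=0$ (then there is nothing to prove) and note that when $m_n(G)\neq 0$ but $m_n(G,g)=0$ the displayed chain forces $m_n(G)\leq 2n^2\cdot 0=0$, a contradiction, so in fact $m_n(G,g)\geq 1$ whenever the statement has content and all logarithms above are well defined.

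The only genuinely substantive point — and the one I would flag as the main obstacle — is the justification that $m_n(G)=\mu^+_n(G)+\mu_n^\circ(G)$, i.e.\ that every maximal subgroup of $G$ of index $n$ lies in $\text{Max}(G,V)$ for a unique $V\in\mathcal V_n(G)$. This is exactly the content of the Gasch\"utz crown machinery summarised before the lemma: a maximal subgroup $M$ determines the chief factor $X_M/Y_M$, which is $G$-isomorphic to a well-defined $V\in\mathcal V(G)$ with $|V|=|G:M|=n$, and conversely $\text{Max}(G,V)$ collects precisely those $M$, with $|\text{Max}(G,V)|$ given by \eqref{qui}. Granting this decomposition — together with \eqref{piumeno}, which is itself a consequence of \eqref{cippo} valid exactly for $V\notin\mathcal V_n^\circ(G)$ — the rest is the elementary estimate above, and the constant $3$ comes out with room to spare.
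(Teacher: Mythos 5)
Your argument is correct and is essentially identical to the paper's proof: both decompose $m_n(G)=\mu^+_n(G)+\mu_n^\circ(G)$ via the crown description of maximal subgroups of a soluble group, use the hypothesis to get $m_n(G)\leq 2\mu^+_n(G)$, apply (\ref{piumeno}), and finish with the same logarithmic estimate using $\log 2/\log n\leq 1$. The extra remarks on the degenerate cases and on the justification of the decomposition are fine but not needed beyond what the paper already records in Section \ref{solo}.
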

\begin{proof}Assuming $\mu^+_n(G)\geq  \mu_n^\circ(G)$, and using (\ref{piumeno}), we get
$$\begin{aligned}
\frac{\log m_n(G)}{\log n} &= \frac{\log \left(\sum_{V\in \mathcal V_n(G)}|\text{Max}(G,V)|\right)}{\log n} = \frac{\log\left(\mu^+_n(G)+ \mu_n^\circ(G)\right)}{\log n}\\
&\leq \frac{\log\left(2\mu^+_n(G)\right)}{\log n} \leq 1+\frac{\log\left(\mu^+_n(G)\right)}{\log n}\leq 1+\frac{\log\left(m_n(G,g)\cdot n^2\right)}{\log n}\\
&\leq 3+\frac{\log\left(m_n(G,g)\right)}{\log n}.\qedhere
\end{aligned}$$
\end{proof}
We say that $n$ is a $G$-dominant integer if 
$\mathcal M(G)=\lceil \log (m_n(G))/\log n\rceil.$ Combining Corollary \ref{bound} and Lemma \ref{amelia}, we deduce:

\begin{cor}\label{domino}Let $G$ be a finite soluble group. If $\mu^+_n(G)\geq  \mu_n^\circ(G)$ for some $G$-dominant integer $n,$ then $e(G)-e(G,g)\leq 11$ for every $g \in G.$
\end{cor}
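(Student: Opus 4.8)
The plan is to combine the two-sided estimate of Theorem~\ref{main} (applied once with $Y=\emptyset$ and once with $Y=\{g\}$) with the inequality of Lemma~\ref{amelia}, using the defining property of a $G$-dominant integer to control $\lceil\mathcal M(G)\rceil$ precisely. So fix $g\in G$ and a $G$-dominant integer $n$ with $\mu^+_n(G)\geq\mu_n^\circ(G)$, and set $t=\log m_n(G)/\log n$. Since $n$ is $G$-dominant we have $m_n(G)\geq 1$, so $n$ is the index of a maximal subgroup and in particular $n\geq 2$; moreover $\lceil\mathcal M(G)\rceil=\lceil t\rceil$, and hence $\lceil\mathcal M(G)\rceil<t+1$.

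First I would bound $e(G)$ from above: by Theorem~\ref{main}, $e(G)\leq\lceil\mathcal M(G)\rceil+3<t+4$. Next I would bound $e(G,g)$ from below. The hypothesis $\mu^+_n(G)\geq\mu_n^\circ(G)$ is exactly the one needed in Lemma~\ref{amelia}, which therefore gives $\log m_n(G,g)/\log n\geq t-3$; in particular $m_n(G,g)\geq 1$, so this ratio is a genuine term of the supremum defining $\mathcal M(G,g)$ (recall $n\geq 2$), whence $\mathcal M(G,g)\geq t-3$. Applying Theorem~\ref{main} once more, $e(G,g)\geq\lceil\mathcal M(G,g)\rceil-4\geq\mathcal M(G,g)-4\geq t-7$. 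Subtracting the two estimates yields $e(G)-e(G,g)<(t+4)-(t-7)=11$, which is the assertion.

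I expect no genuine obstacle: the argument is just a careful bookkeeping of the estimates of Theorem~\ref{main}, and it works without isolating any degenerate case, since when $t-7<0$ the bound $e(G,g)\geq t-7$ is vacuous but still valid and the final subtraction is unaffected. The single point that does deserve attention is the numerical constant $11$. Simply substituting $\mathcal M(G,g)\geq t-3$ and $\mathcal M(G)=\lceil t\rceil$ into Corollary~\ref{bound} would give only $e(G)-e(G,g)\leq\lceil t\rceil-t+11<12$; to recover the missing unit one has to bypass the step $\lceil\mathcal M(G)\rceil\leq\mathcal M(G)+1$ that is used to deduce Corollary~\ref{bound} from Theorem~\ref{main}, and instead feed $\lceil\mathcal M(G)\rceil=\lceil t\rceil$ directly into the upper estimate of Theorem~\ref{main}, so that the slack $\lceil t\rceil-t<1$ is absorbed into the bound.
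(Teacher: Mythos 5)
Your proposal is correct and takes essentially the same route as the paper, which deduces the corollary by combining Corollary \ref{bound} (itself Theorem \ref{main} applied with $Y=\emptyset$ and $Y=\{g\}$) with Lemma \ref{amelia} and the definition of a $G$-dominant integer. Your extra care in feeding $\lceil\mathcal M(G)\rceil=\lceil t\rceil$ directly into the upper bound of Theorem \ref{main}, instead of passing through the slack $\mathcal M(G)\leq t+1$ hidden in Corollary \ref{bound}, is just the bookkeeping needed to land on the constant $11$ rather than $12$, and matches the intended reading of the paper's one-line deduction.
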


Now we are ready to prove  Theorem \ref{meta}. 

\begin{proof}[Proof of Theorem \ref{meta}] 
 Suppose that the derived subgroup of $G$ is nilpotent.
Without loss of generality we can assume that $\frat(G)=1$. In this case the Fitting subgroup $\fit(G)$ of $G$ is a direct product of minimal normal subgroups of $G$, it is abelian and complemented. Let $K$ be a complement of $\fit(G)$ in $G$. Since $G^\prime$ is nilpotent, $G^\prime \leq \fit(G).$ In particular $K^\prime \leq \fit(G)\cap K=1,$ and therefore $K$ is abelian.
  Let $F$ be a complement of $Z(G)$ in $\fit(G)$
and let $H=Z(G)\times K.$ We have $G=F\rtimes H$ and
we can write $F$ in the form
$$F = V_1^{\delta_G(V_1)} \times \cdots \times V_r^{\delta_G(V_r)}$$
where $V_1, \ldots , V_r$ are irreducible nontrivial $H$-modules, pairwise not $H$-isomorphic. Let $\mathbb F_i=\End_H(V_i).$ Since  $H$  is  abelian, $r_G(V_i)=\dim _{\FF_i}V_i=1$ and $H/C_H(V_i)$ is isomorphic to a subgroup of $\FF_i^*.$	Suppose that $n$ is a $G$-dominant integer.
By Corollary \ref{domino}, we may assume $\mu^+_n(G) < \mu_n^\circ(G).$
Since $r_G(V)=1$ for every $V \in \mathcal V(G),$ $\delta_G(V)=1$ for every
$V \in \mathcal V_n^\circ(G).$
Moreover, if
$V \in \mathcal V_n(G),$ then either $V=V_i$ for some $1\leq i \leq r$ or
$V$ is a trivial-$G$-module (this occurs only if $n$ is a prime and it divides $|H|$). So
$$\mu_n^\circ(G)\leq 1+\tau,$$ where $\tau$ is the cardinality of the set $\mathcal T=\left\{i \in \{1,\dots,r\} \mid |V_i|=n \right\}.$
In particular,  to any $i\in\mathcal T,$
there corresponds a different nontrivial homomorphism from $H$ to $\FF_i^*\cong C_{n-1}$. It follows
$$ \tau\leq (n-1)^{d(H)}-1.$$ But then
$$\mu_n^\circ(G) \leq (n-1)^{d(H)}\leq (n-1)^{d(G)}.$$
By Theorem \ref{main}
$$e(G)	\leq \mathcal M(G) +4\leq \frac{\log(2\mu_n^\circ(G))}{\log n}+4 \leq \frac{\log \mu_n^\circ(G)}{\log n}+5\leq d(G)+5.$$
 Since
$e(G,g)\geq d(G)-1,$ we conclude that $e(G)-e(G,g)\leq 6.$
\end{proof}

\section{Proof of Theorem \ref{strongsol}}

Le $X$ be the Sylow 2-subgroup of $\GL(2,3).$ Recall that $X$ is the semidihedral group of order 16, it is 2-generated and $|\aut(X)|=16.$
Now let $F$ be the free group of rank $d\geq 2$ and denote by $\Sigma$ the set of the epimorphisms $\sigma: F \to X.$ We have
$$|\Sigma|=|X|^d P_X(d)\geq 16^d P_X(2)=\frac{3 \cdot 16^d}{8}.$$
Now let $$N:=\bigcap_{\sigma \in \Sigma}\ker \sigma \quad \text { and }\quad H:=F/N.$$
Then $H$ is a $d$-generated 2-group
 and it contains $\rho:=|\Sigma|/|\aut(X)|$ different normal subgroups $N_1,\dots,N_\rho$ with $H/N_i\cong X.$
If follows from \cite{pome} that
$$e(H)=e(C_2^d)=d+\sum_{1\leq i\leq d}\frac{1}{2^i-1}\leq d+\sum_{0\leq j\leq d-1}\frac{1}{2^j}\leq d+2.$$
We consider the semidirect product
$$G_d:=\prod_{1\leq i \leq \rho}V_i\rtimes H,$$
where $V_i\cong  C_3\times C_3$ and $N_i$ is the kernel of the action of $H$ on $V_i$. Notice that by the main result in \cite{eul}, $d(G_d)=d(H)=d.$
It follows that $$\mathcal M(G_d)=\frac{\log(m_9(G_d))}{\log(9)}\geq 
\frac{\log \left(\frac{3 \cdot 16^{d-1}}{8} \right)}{\log (9)}=
\frac{(d-1)\log(16)+\log(3)-\log(8)}{\log(9)}.$$
In particular
$$e(G_d)\geq \frac {d\log(16)}{\log(9)}-c,  \text { with }  c=4+\frac{\log (16)+\log (8)-\log (3)}{\log (9)} \leq 6.$$
Now consider $g=(v_1,\dots,v_\rho)\in \prod_{1\leq i \leq \rho}(V_i\setminus \{0\}).$ Then
$m_9(G_d,g)=0,$ so $e(G_d,g)\leq e(H)\leq d+2.$
We conclude that 
$$e(G_d)-e(G_d,g)\geq  \frac {d\log(16)}{\log (9)} - 6 -(d+2)=
d\left(\frac{\log(16)-\log(9)}{\log(9)}\right)-8\sim 0.262\cdot d-8.$$

\noindent {\bf Remark.} Notice that the previous argument can be repeated taking an arbitrary pair $(H,V),$ where $H$ is a nilpotent irreducible subgroup of $\GL(V)$ with $|H|>|V|.$ There exists a positive constant $c$, depending only on $H$, such that if $\tilde H_d$ is the largest $d$-generated subdirect product of copies of $H,$ and $\rho$ is the number of different kernels of epimorphisms from $\tilde H_d\to H,$ then the semidirect product $G_d=\prod_{1\leq i\leq \rho}V_i \rtimes \tilde H_d$ contains an element $g$ with the property that
$$e(G_d)-e(G_d,g)\geq  d\left(\frac {\log|H|}{\log|V|}-1\right)-c.$$
In the proof of \cite[Proposition 1.7]{wo}  a sequence
$(H_n,V_n)_{n\in \mathbb N}$ is constructed, where $H_n$ is a nilpotent irreducible subgroup of $\GL(V_n),$ such that
$$\lim_{n\to \infty}\frac{\log|H_n|}{\log |V_n|}=\frac{2^n\log (16)+(1+2+\dots+2^{n-1})\log (2)}{2^n\log (9)}=\frac{\log (32)}{\log (9)}
\sim 1.577,$$ hence, for $d$ large enough, it can be constructed a $d$-generated soluble group $G_d$
containing an element $g$ with $e(G_d)-e(G_d,g)\sim 0.577 \cdot d.$
By \cite[Theorem 1.6]{wo}, this is the largest  gap that can be obtained with these kind of examples.

\section{Other considerations about profinite groups}
As mentioned in the introduction, if $G$ is a $d$-generated prosoluble group, then $G$ is PFG and the gap $e(G)-e(G,g)$ is bounded, for every $g\in G,$ by 
 $e(\widehat F_{\text{sol},d}),$  
where $\widehat F_{\text{sol},d}$ is the free-prosoluble group of rank $d$ (i.e the prosoluble completion of the free group of rank $d$). 
 In the case where $G$ is precisely the free-prosoluble group, the arguments introduced in Section 3 allow us to show that this gap can be actually bounded by a constant that does not depend on  $d(G)$.

\begin{prop} There exists an absolute constant $\kappa $ 
such that 
	$$e(\widehat F_{\text{sol},d})-e(\widehat F_{\text{sol},d},g)\leq \kappa $$ 
	 	for every positive integer $d$ and for every $g \in \widehat F_{\text{sol},d}.$
\end{prop}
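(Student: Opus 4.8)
The plan is to apply the crown‑counting machinery of Section~\ref{solo} directly to the profinite group $G:=\widehat F_{\text{sol},d}$, exploiting that a \emph{free} prosoluble group is ``saturated'' in every crown, i.e.\ no module in $\mathcal V(G)$ falls into the bad set $\mathcal V^\circ_n(G)$ once $d$ is large enough. First observe that every finite quotient of $\widehat F_{\text{sol},d}$ is a finite soluble group generated by $d$ elements, so by \cite[Corollary~14]{mon} we have $e(G/N)\leq\lceil\gamma d\rceil+\alpha$ for every open normal $N\leq G$; hence $e(\widehat F_{\text{sol},d})=\sup_N e(G/N)\leq\lceil\gamma d\rceil+\alpha<\infty$, and in particular $\widehat F_{\text{sol},d}$ is PFG. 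Consequently, for any fixed $d_0$ the inequality of the Proposition holds for all $d\leq d_0$ with $\kappa=\lceil\gamma d_0\rceil+\alpha$ (using $e(G,g)\geq 0$), and it remains to treat all $d$ above a suitable threshold.

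Since $G=\widehat F_{\text{sol},d}$ is PFG it has finitely many open maximal subgroups of each index, so for every irreducible $V$ that occurs as a complemented chief factor of a finite quotient of $G$ (the set $\mathcal V(G)$) the subgroup $R_G(V)=\bigcap_{M\in\text{Max}(G,V)}M$ is open, $C_G(V)$ is open, $C_G(V)/R_G(V)\cong_G V^{\delta_G(V)}$ with $\delta_G(V)$ a well‑defined finite integer, every $M\in\text{Max}(G,V)$ has index $n=|V|$, and the computations behind \eqref{qui}--\eqref{piumeno} carry over verbatim because they only involve the finite $H$‑module $V^{\delta_G(V)}$ with $H=G/C_G(V)$. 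In particular $m_n(G)=\mu^+_n(G)+\mu^\circ_n(G)$ and $\mu^+_n(G)\leq m_n(G,g)\,n^2$ for all $n$ and all $g\in G$.

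The heart of the argument is the size of $\delta_G(V)$. Because $V^\delta\rtimes H$ is a finite soluble group, it is a quotient of $\widehat F_{\text{sol},d}$ exactly when it is $d$‑generated, so $\delta_G(V)$ is the largest $\delta$ for which $V^\delta\rtimes H$ is $d$‑generated; by the Gasch\"utz--Lyndon theory of relation modules (cf.\ \cite{gaz}) this equals
$$\delta_G(V)=(d-1)\,r_G(V)+\dim_{\mathbb{F}_V}H^0(H,V)-\dim_{\mathbb{F}_V}H^1(H,V),$$
which is $d$ when $V$ is central (then $H=1$ and $r_G(V)=1$) and equals $(d-1)\,r_G(V)-\dim_{\mathbb{F}_V}H^1(H,V)$ when $V$ is non‑central (then $H^0(H,V)=V^H=0$). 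Using the standard fact that $\dim_{\mathbb{F}_p}H^1(H,V)\leq c_1\dim_{\mathbb{F}_p}V$ for an absolute constant $c_1$ and all finite soluble $H$ and irreducible $V$, we obtain $\dim_{\mathbb{F}_V}H^1(H,V)\leq c_1\,r_G(V)$ and hence $\delta_G(V)\geq(d-1-c_1)\,r_G(V)>r_G(V)$ as soon as $d\geq c_1+3$. Taking $d_0=c_1+2$ in the first step, for every $d\geq d_0+1$ we get $\mathcal V^\circ_n(G)=\varnothing$, so $\mu^\circ_n(G)=0$, $m_n(G)=\mu^+_n(G)\leq m_n(G,g)\,n^2$ for all $n$, whence $\mathcal M(G)\leq\mathcal M(G,g)+2$; applying Corollary~\ref{bound} (valid for finitely generated profinite groups) gives
$$e(\widehat F_{\text{sol},d})-e(\widehat F_{\text{sol},d},g)\leq \mathcal M(G)-\mathcal M(G,g)+8\leq 10,$$
so $\kappa=\max\{10,\lceil\gamma(c_1+2)\rceil+\alpha\}$ works.

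The main obstacle is the exact value of $\delta_G(V)$ --- determining how large a power of a chief factor a $d$‑generated soluble group can carry --- together with the uniform linear bound on $\dim H^1$ for soluble groups; these are where the \emph{freeness} of $\widehat F_{\text{sol},d}$ genuinely enters, and it is precisely the feature that fails for the soluble groups constructed in the proof of Theorem~\ref{strongsol}, whose dominant crown has $\delta_G(V)\leq r_G(V)$, so that $\mu^\circ_n$ dwarfs $\mu^+_n$ and a strongly generating element appears.
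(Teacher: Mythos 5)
Your proposal is correct and follows the same overall strategy as the paper's proof: bound $e(\widehat F_{\text{sol},d})$ by $\lceil \gamma d\rceil+\alpha$, use the crown machinery of Section~\ref{solo} to show that $\mathcal V^\circ_n$ is empty because freeness forces $\delta_G(V)>r_G(V)$, then conclude via \eqref{piumeno} and Corollary~\ref{bound} with a constant around $10$. The one genuine difference is the key input used to evaluate $\delta_G(V)$: the paper quotes Gasch\"utz's exact criterion from \cite{eul}, namely that $V^t\rtimes H$ is $d$-generated if and only if $t\leq r_H(V)(d-\theta_H(V))$, which immediately gives $\delta_G(V)>r_G(V)$ for every $d>2$ with no case distinction; you instead use the cohomological formula $\delta_G(V)=(d-1)r_G(V)+\dim H^0(H,V)-\dim H^1(H,V)$ together with a linear bound on $\dim H^1$, which forces you to take $d$ above a threshold $c_1+3$ and to dispose of small $d$ separately via the $\lceil\gamma d\rceil+\alpha$ bound. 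Your formula (with the minus sign on $H^1$) is indeed the correct general one and your identification of $\delta_G(V)$ with the largest $t$ for which $V^t\rtimes H$ is $d$-generated is exactly the point where freeness enters in the paper as well, so the argument works; two small caveats: the ``standard fact'' $\dim H^1(H,V)\leq c_1\dim V$ is false without assuming the action faithful (take $V$ trivial and $H$ elementary abelian of large rank), but in your application $H=G/C_G(V)$ does act faithfully, where one even has $\dim H^1<\dim V$, and in fact for soluble $H$ acting faithfully and irreducibly $H^1(H,V)=0$ (conjugacy of complements in soluble primitive groups) --- this vanishing is precisely what is packaged in the Gasch\"utz criterion the paper cites, and using it would let you drop the constant $c_1$, the threshold on $d$, and the case split, recovering the paper's cleaner bound. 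Also, \cite{gaz} (Praefrattinigruppen) is not the right reference for the generation formula; the paper's citation \cite{eul} is.
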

\begin{proof}A prosoluble  group is PFG \cite[Theorem 10]{m}, hence $e(\widehat F_{\text{sol},d}) < \infty$. More precisely there exists a constant $\alpha$ such that, for each $d \geq 2,$ we have
$e(\widehat F_{\text{sol},d}) \leq \lceil \gamma d\rceil + \alpha$, 
	where $\gamma \sim 3.243$ is the  P\`{a}lfy-Wolf  constant. Clearly it is not restrictive to assume $d>2.$ Let $G=\widehat F_{\text{sol},d}$ and
	let $K_n$ be the intersection of the maximal open subgroups of $G$ with index $n.$ Since $G$ is finitely generated, $K_n$ is an open normal subgroup of $G$,  $m_n(G)=m_n(G/K_n)$ and $m_n(G,g)=m_n(G/K_n,gK_n)$
	for every $g\in G.$ Let $V\in \mathcal V_n(G/K_n)$ and
set  $H=(G/K_n)/C_{G/K_n}(V) \cong G/C_G(V).$ It follows from the main result in \cite{eul} that the semidirect product $V^t \rtimes H$ is $d$-generated if and only if $t\leq r_{H}(V)(d-\theta_H(V)).$ Since $G$ is the free-prosoluble group of rank $d,$ every $d$-generated finite soluble group is an epimorphic image of $G$. It follows that $V^{r_H(V)(d-\theta_H(V))}\rtimes H$
	is an epimorphic image of $G/K_n$ and consequently $\delta_{G/K_n}(V)=r_H(V)(d-\theta_H(V)).$
	Since we are assuming $d>2,$ $\delta_{G/K_n}(V)>r_H(V)=r_{G/K_n}(V),$ i.e. $\mathcal V^\circ_n(G/K_n)=\emptyset.$ By (\ref{piumeno}) $m_n(G)=m_n(G/K_n)\leq m_n(G/K_n,gK_n)n^2 = m_n(G,g)n^2$ for every $g\in G$, and consequently, by Corollary \ref{bound},  $e(G)-e(G,g)\leq 10.$ 
\end{proof}

We conclude with some considerations on 
 strongly generating elements in the finitely generated profinite groups that are not PFG. Recall that when $G$ is a finitely generated group which is not PFG, then we say that $g\in G$ is a strongly generating element if $e(G,g)$ is finite. Denote by $\mathcal{S}(G)$ the set of these elements. Notice that $g\in \mathcal{S}(G)$ if and only if there exists a positive integer $k$ with the property that $P_{G,\{g\}}(k) < \infty.$
The set $\mathcal F(G)$ of the elements of $G$ which do not satisfy this property is studied in \cite{dL}. In particular,  by \cite[Proposition 2.6]{dL}, 
$\mu(\mathcal S(G))=1-\mu(\mathcal F(G))=0,$ in other words a non-PFG group $G$ can contain strongly generating elements, but the probability that a randomly chosen element of $G$ is a strongly generating element is zero.
Nevertheless, $\mathcal S(G)$ can be dense in $G$ (see \cite[Proposition 3.5]{dL}). On the other hand, there are examples of non-PFG groups without strongly generating elements. In particular, 
 $\mathcal S(\hat F_d)=\emptyset$ for every $d\geq 2$, denoting by $\hat F_d$ the free profinite group of rank $d$ \cite[Theorem 3.3]{dL}.

\end{document}